\documentclass[12pt,reqno]{amsart}
\usepackage[hyphens]{url} 
\numberwithin{equation}{section}

\makeatletter
\@namedef{subjclassname@2020}{%
  \textup{2020} Mathematics Subject Classification}
\makeatother

\usepackage{amsmath,amssymb,color}
\usepackage{amsthm} 
\usepackage{multicol}
\usepackage{mathtools}
\usepackage[bookmarksnumbered,colorlinks]{hyperref}
\usepackage{enumitem}
\mathtoolsset{showonlyrefs}
\usepackage[pagewise,mathlines]{lineno}

\theoremstyle{plain} 
\newtheorem{theorem}{\indent\sc Theorem}[section]
\newtheorem{lemma}[theorem]{\indent\sc Lemma}
\newtheorem{corollary}[theorem]{\indent\sc Corollary}
\newtheorem{proposition}[theorem]{\indent\sc Proposition}

\theoremstyle{definition} 

\newtheorem{remark}[theorem]{\indent\sc Remark}

\title[An integral inequality for Bach-flat $\mathcal{A}_{2}$-manifolds]{\sc An integral inequality for compact Bach-flat $\mathcal{A}_{2}$-manifolds}

\bigskip

\author[F.R. dos Santos and E.J. Santos]{F\'abio R. dos Santos and Elisa J. Santos\\
}

\address{
Departamento de Matem\'atica \\
Universidade Federal de Pernambuco \\
50.740-540 Recife, Pernambuco \\
Brazil}
\email{fabio.reis@ufpe.br}
\email{elisa.santos@ufpe.br}

\keywords{Bach-flat manifolds, Catino-type integral inequality, $\mathcal{A}_2$-manifolds, rigidity results}

\subjclass[2020]{Primary 53C25; Secondary 53A30, 53C21, 53C24}
\thanks{$^{\ast}$Corresponding author}

\begin{document}

\begin{abstract}
We establish a Catino-type integral inequality for closed Bach-flat $\mathcal{A}_2$-manifolds, namely Riemannian manifolds with nonnegative scalar curvature and constant nonnegative second Schouten curvature. In the equality case, we derive rigidity results showing that the manifold is either Einstein or isometrically covered by $\mathbb{S}^{1}\times\mathbb{S}^{n-1}(\kappa)$ endowed with the product metric.
\end{abstract}


\maketitle

\section{Introduction} 

In recent years, integral pinching inequalities have played an important role in the classification of Riemannian manifolds under curvature assumptions. An interesting contribution in this direction is due to Catino~\cite{Catino:16}, who proved an optimal integral inequality for compact conformally flat manifolds with positive constant scalar curvature. More precisely, if $(M^{n},g)$ is a compact conformally flat Riemannian manifold with positive constant scalar curvature, then the following inequality holds:
\begin{equation}
\int_{M}|E_{g}|^{\frac{n-2}{n}}\left(R_{g}-\sqrt{n(n-1)}\,|E_{g}|\right)dM_{g}\leq0,
\end{equation}
where $R_{g}$ denotes the scalar curvature, $E_{g}$ the traceless Ricci tensor, and $dM_{g}$ is the Riemannian volume element. Moreover, equality holds if and only if $(M^{n}, g)$ is isometrically covered by either $\mathbb{S}^{n}$ with the round metric, $\mathbb{S}^{1}\times \mathbb{S}^{n-1}$ with the product metric, or $\mathbb{S}^{1}\times \mathbb{S}^{n-1}$ endowed with a rotationally symmetric Derdzi\'nski metric.

Motivated by Catino’s result, it is natural to ask whether similar inequalities remain valid for larger classes of manifolds or under weaker curvature assumptions. In this direction, Huang and Ming~\cite{Huang:16} extended Catino’s result to compact manifolds with harmonic curvature tensor and positive scalar curvature, obtaining analogous rigidity conclusions. Later, Cao and Fang~\cite{Yawei:17} established a Catino-type integral inequality for compact manifolds with a parallel Cotton tensor and positive constant scalar curvature, along with rigidity results that characterize the equality case. More recently, Huang and Zhang~\cite{Huang:20} established a Catino-type integral inequality for compact manifolds with positive scalar curvature, thereby generalizing previous results in both the conformally flat and harmonic curvature settings. Related results were subsequently obtained by Hai-Ping Fu~\cite{Hai:17}.

On the other hand, in a seminal paper on conformal gravitation, Rodolph Bach~\cite{Bach:21} computed the first variation of the Weyl functional
\begin{equation}
\mathcal{W}(g)=\int_{M}|W_{g}|^{2}\,dM_{g},
\end{equation}
where $|W_{g}|$ denotes the norm of the Weyl curvature tensor. In dimension four, the Euler-Lagrange equation associated with this functional defines a natural conformally invariant, symmetric, and traceless tensor, now called the Bach tensor, whose vanishing characterizes the critical points of $\mathcal{W}$. More generally, the Bach tensor can be expressed in terms of the Weyl tensor and the divergence of the Cotton tensor $C_g$. Denoting by $S_g$ the Schouten tensor, it is given by
\begin{equation}\label{def_of_bach}
B_{g}(X)=\frac{1}{n-2}\left(({\rm div}\,C_g)(X)+W_{g}(\cdot,X)S_{g}(\cdot)\right)\footnote{Here, $B_g$ denotes the self-adjoint $(1,1)$-tensor associated with the Bach tensor via the metric $g$.},\qquad n\geq3.
\end{equation}
A manifold is called Bach-flat if $B_g=0$. This class includes several important geometric structures. In particular, Einstein and conformally flat manifolds provide fundamental examples. In dimension four, further examples arise from self-dual and anti-self-dual metrics, as well as from locally conformally Einstein structures. Nontrivial examples of Bach-flat metrics that are neither Einstein nor conformally flat were constructed by Derdziński~\cite{Derdzinski:83}, showing that the Bach-flat condition is strictly weaker than both the Einstein and conformally flat conditions.

In a recent paper~\cite{Santos:24}, the authors introduced the notion of an $\mathcal{A}_{2}$-manifold, namely a Riemannian manifold with nonnegative scalar curvature and constant nonnegative second Schouten curvature $\sigma_2(g)$. In that work, they obtained characterizations of complete conformally flat manifolds by employing an Omori-Yau maximum principle and established a Catino-type integral inequality for this class of manifolds. Motivated by these developments, the aim of this paper is to establish a Catino-type integral inequality for closed Bach-flat $\mathcal{A}_2$-manifolds. Our main result reads as follows.
\begin{theorem}\label{teo:1.1}
Let $(M^{n},g)$ be a compact Bach-flat $\mathcal{A}_2$-manifold with positive $\sigma_{2}$-curvature. Then
\begin{equation}\label{eq:1.0}
\int_{M}|E_g|^{2p}\left(2\sqrt{|E_g|^2+2\sigma_2(g)}-(n-2)|E_g|\right)dM_{g}\leq\sqrt{\dfrac{2(n-2)^{3}}{n}}\int_{M}|E_g|^{2p}|W_g|dM_{g},
\end{equation}
for all $p\geq2$. Moreover, if $p>2$, the equality holds if and only if $(M^{n},g)$ is either an Einstein manifold or isometrically covered by $\mathbb{S}^{1}\times\mathbb{S}^{n-1}(\kappa)$, $\kappa>0$, with the product metric.
\end{theorem}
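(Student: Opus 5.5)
The plan is the Catino/Huang--Ming scheme: a Weitzenb\"ock formula for $E_g$, in which Bach-flatness replaces the divergence of the Cotton tensor, followed by algebraic pinching and integration against $|E_g|^{2p-2}$.

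\textbf{Step 1: rewrite the scalar curvature.} Write $S_g=\frac{1}{n-2}\big(E_g+\frac{R_g}{2n(n-1)}\,(n-2)\,g\big)$, so that $\operatorname{tr}S_g=\frac{R_g}{2(n-1)}$. Then $\sigma_2(g)=\frac12\big((\operatorname{tr}S_g)^2-|S_g|^2\big)$ expresses $R_g^2$ as an explicit combination of $|E_g|^2$ and the constant $\sigma_2(g)$. Since $R_g\ge0$, in the paper's normalization $R_g$ is a fixed positive multiple of $\sqrt{|E_g|^2+2\sigma_2(g)}$. This is why the left-hand side of \eqref{eq:1.0} has exactly the shape of Catino's integrand $R_g-c_n|E_g|$. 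Two consequences follow:
\begin{itemize}
\item $\nabla R_g$ is a function of $|E_g|$ times $\nabla|E_g|^2$;
\item $\sigma_2>0$ makes $R_g$ bounded below by a positive constant, so the relevant quantities are smooth.
\end{itemize}

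\textbf{Step 2: a Bochner identity for $E_g$.} Use the standard commutation formula for $\Delta R_{ij}$. This yields $\Delta E_{ij}$ as the sum of:
\begin{itemize}
\item a Hessian term $\frac{n-2}{2(n-1)}\big(\nabla_i\nabla_jR-\frac1n\Delta R\,g_{ij}\big)$;
\item the divergence of the Cotton tensor;
\item zeroth-order terms $-2W_{ikjl}E_{kl}+\frac{n}{n-2}\big(E_{ik}E_{kj}-\frac1n|E|^2g_{ij}\big)+\frac{R}{n-1}E_{ij}$ (up to normalizing constants).
\end{itemize}
Bach-flatness gives $\operatorname{div}C_g=-W_g(\cdot,\cdot)S_g$, and this is again of the form $W\ast E$. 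So $\langle E,\Delta E\rangle$ has the following structure:
\begin{itemize}
\item a Hessian term;
\item a Weyl term $\lambda_n W_{ijkl}E_{ik}E_{jl}$;
\item the term $\frac{n}{n-2}\operatorname{tr}E^3$;
\item the term $\frac{R}{n-1}|E|^2$.
\end{itemize}
I would then apply two algebraic estimates:
\begin{itemize}
\item Okumura's lemma, $|\operatorname{tr}E^3|\le\frac{n-2}{\sqrt{n(n-1)}}|E|^3$;
\item the Huisken/Catino estimate $|W_{ijkl}E_{ik}E_{jl}|\le\sqrt{\frac{n-2}{2(n-1)}}\,|W||E|^2$.
\end{itemize}
After substituting $R$ from Step 1, these give
\[
\tfrac12\Delta|E|^2\ \ge\ |\nabla E|^2+(\text{Hessian term})+c\,|E|^2\Big(2\sqrt{|E|^2+2\sigma_2}-(n-2)|E|-\sqrt{\tfrac{2(n-2)^3}{n}}|W|\Big).
\]
The constants should match those in \eqref{eq:1.0}.

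\textbf{Step 3: integrate and absorb the gradient terms (the main obstacle).} Multiply by $|E|^{2p-2}$ and integrate over the closed manifold $M$. The left side becomes $-(p-1)\int|E|^{2p-4}|\nabla|E|^2|^2/2$, which is a nonpositive gradient term. For the Hessian term, integrate by parts using $\operatorname{div}E=\frac{n-2}{2n}\nabla R$. Since $\nabla R$ is proportional to $\nabla|E|^2$ with a bounded coefficient, this produces only terms in $|E|^{2p-2}|\nabla|E||^2$, plus a term in $|\nabla R|^2$ of favorable sign. I would combine these with a refined Kato inequality for $E$ (the version valid when $\operatorname{div}E$ is a gradient, as used by Catino), $|\nabla E|^2\ge\frac{n+2}{n}|\nabla|E||^2$ up to the divergence correction. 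The goal is to show that the total coefficient of $\int|E|^{2p-2}|\nabla|E||^2$ is nonnegative exactly when $p\ge2$, and strictly positive when $p>2$. This bookkeeping of constants is where I expect the work and the threshold $p\ge2$ to appear. Discarding the nonnegative gradient terms then yields \eqref{eq:1.0}.

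\textbf{Step 4: the equality case.} If equality holds with $p>2$, the strictly positive gradient coefficient forces $\nabla|E|=0$ wherever $E\neq0$, so $|E|$ is constant. Then $R$ is constant by Step 1, and $\nabla E=0$ also follows from the Kato step. Either $E\equiv0$, in which case $g$ is Einstein, or $\operatorname{Ric}$ is parallel and nonzero-traceless. In the latter case equality in Okumura's lemma forces $E$ to have exactly two eigenvalues, with multiplicities $1$ and $n-1$. By de Rham decomposition the universal cover splits as $\mathbb{R}\times N^{n-1}$. Equality in the Weyl estimate, together with Bach-flatness, forces $N$ to be Einstein with vanishing relevant Weyl components, hence of constant curvature $\kappa>0$, with $\kappa>0$ coming from $\sigma_2>0$. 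Compactness then gives that $M$ is covered by $\mathbb{S}^1\times\mathbb{S}^{n-1}(\kappa)$. Conversely, Einstein metrics and this product satisfy equality by direct computation.
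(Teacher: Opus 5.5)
Your route differs from the paper's. You run the Bochner argument with the ordinary Laplacian and integrate the Hessian term by parts; the paper instead uses the Newton-tensor operator $\square_g$.

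\textbf{Steps 1--2 are sound.} Use the paper's normalization $S_g=\mathrm{Ric}_g-\frac{R_g}{2(n-1)}g$, not the one you wrote. With $B_g=0$ and $u=|E_g|^2$ one has $\frac12\Delta u=|\nabla E_g|^2+\langle E_g,\nabla^2\sigma_1\rangle+Z$. Okumura, Huisken and $\sigma_1=\sqrt{\frac{n}{n-1}}\sqrt{u+2\sigma_2}$ give
\[
Z\ge\frac{1}{n-2}\sqrt{\frac{n}{n-1}}\,u\,\Big(2\sqrt{u+2\sigma_2}-(n-2)|E_g|-\sqrt{2(n-2)^3/n}\,|W_g|\Big).
\]

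\textbf{The gap is Step 3.} It carries the whole difficulty, and you leave it as ``bookkeeping'' with tools that do not apply.

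\emph{Refined Kato is unavailable.} The inequality $|\nabla E_g|^2\ge\frac{n+2}{n}|\nabla|E_g||^2$ requires $E_g$ to be Codazzi, as in Catino's and Huang--Ming's settings. Bach-flatness only prescribes $\mathrm{div}\,C_g$; it leaves $C_g$ itself uncontrolled, and $\sigma_1$ need not be constant. A divergence condition alone does not improve the Kato constant.

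\emph{The $|\nabla R_g|^2$ term has the unfavorable sign.} It comes from $\mathrm{div}\,E_g=\frac{n-1}{n}\nabla\sigma_1$. Integrating against $u^{p-1}$ gives
\[
\int_M u^{p-1}Z=-\frac{p-1}{2}\int_M u^{p-2}|\nabla u|^2-\int_M u^{p-1}\Big(|\nabla E_g|^2-\frac{n-1}{n}|\nabla\sigma_1|^2\Big)+(p-1)\int_M u^{p-2}E_g(\nabla u,\nabla\sigma_1),
\]
and you must show the right-hand side is $\le0$. The $|\nabla\sigma_1|^2$ term competes with $|\nabla E_g|^2$, and the cross term has no sign. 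Your plan does not show this, and no threshold at $p=2$ actually emerges from this computation.

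\textbf{The missing idea} is to use the constancy of $\sigma_2$ on the gradient terms as well as on the zeroth-order ones.
\begin{itemize}
\item Differentiating $\frac{n-1}{n}\sigma_1^2=u+2\sigma_2$ gives $\frac{n-1}{n}|\nabla\sigma_1|^2=t^2|\nabla|E_g||^2$, with $t=|E_g|/\sqrt{|E_g|^2+2\sigma_2}<1$. So plain Kato makes the middle bracket nonnegative; this is exactly the inequality $|\nabla S_g|^2\ge|\nabla\sigma_1|^2$ the paper uses.
\item We have $\nabla\sigma_1=\frac{n}{2(n-1)\sigma_1}\nabla u$, and the top eigenvalue of the traceless $E_g$ is at most $\sqrt{(n-1)/n}\,|E_g|$. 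Hence $E_g(\nabla u,\nabla\sigma_1)\le\frac t2|\nabla u|^2$.
\item Therefore the first and third terms together are at most $-\frac{p-1}{2}\int_M(1-t)\,u^{p-2}|\nabla u|^2$.
\end{itemize}
This closes the inequality. In the equality case, with $p>1$ and $\sigma_2>0$, it forces $\nabla u=0$. Then $\sigma_1$ is constant, and $\nabla E_g=0$ follows from the Kato bracket.

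\textbf{Comparison with the paper.} The paper sidesteps all of this. Since $\square_g\sigma_1=\sigma_1\Delta\sigma_1-\mathrm{tr}(S_g\circ\nabla^2\sigma_1)$, the operator absorbs the Hessian term of the Weitzenb\"ock formula exactly. So $\frac{n}{2(n-1)}\square_g u=\sigma_1\square_g\sigma_1+g(P_g\nabla\sigma_1,\nabla\sigma_1)$ satisfies a pointwise inequality with no gradient terms. Integration then only needs $f(t)=t^{p-1}/\sqrt{t+2\sigma_2}$ nondecreasing and $P_g\ge0$.

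\textbf{Step 4, Weyl part.} For $E_g=\lambda g+(\mu-\lambda)v^*\otimes v^*$, the quantity $\sum_{i,l}W_g(e_i,e_l,E_ge_i,E_ge_l)$ vanishes identically, by tracelessness and antisymmetry of $W_g$. So equality in Huisken's estimate forces $W_g=0$. It is conformal flatness, not Bach-flatness, that makes the Einstein factor a space form, and $\kappa>0$ follows from $R_g>0$.
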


The proof of Theorem~\ref{teo:1.1} is given in Section~\ref{proof}.
\medskip

\section{Some preliminaries}\label{sec:preliminaries}

Let $(M^{n},\nabla,g)$ be a connected Riemannian manifold of dimension $n\geq3$ endowed with a Riemannian metric tensor $g$ and Levi-Civita connection $\nabla$. For simplicity, we will write only $M^{n}$ to denote the triple $(M^{n},\nabla,g)$. To fix the notation, we recall the Schouten tensor $S_{g}:\mathfrak{X}(M)\times\mathfrak{X}(M)\to\mathcal{C}^{\infty}(M)$, which is defined by
\begin{equation}\label{eq:1.1}
S_{g}={\rm Ric}_{g}-\frac{R_{g}}{2(n-1)}\,g,
\end{equation}
where ${\rm Ric}_{g}$ and $R_{g}$ denote the Ricci curvature and the scalar curvature of $M^{n}$, respectively. In what follows, we shall regard $S_{g}$ as a $(1,1)$-tensor, that is,
\begin{equation}
S_{g}(X,Y)=g(S_{g}(X),Y),\quad X,Y\in\mathfrak{X}(M).
\end{equation}
We define the squared norm of the Schouten tensor and the first Schouten curvature by
\begin{equation}
|S_{g}|={\rm tr}_{g}(S_{g}^{2})\quad\mbox{and}\quad \sigma_{1}(g):={\rm tr}_{g}(S_{g})=\frac{n-2}{2(n-1)}R_{g},
\end{equation}
where the trace ${\rm tr}_{g}$ is taking in the metric $g$ and $|\cdot|$ is the Hilbert-Schimidt norm of $S_{g}$.

In terms of the Schouten tensor, we can express the Weyl curvature tensor $\mathcal{W}_{g}$ by
\begin{equation}\label{eq:1.2}
R(X,Y)Z=\mathcal{W}_{g}(X,Y)Z+\frac{1}{n-2}(S_{g}\odot g)(X,Y)Z,
\end{equation}
where $R$ denotes the Riemann curvature $(1,3)$-tensor defined by
\begin{equation}\label{eq:1.14}
R(X,Y)Z=-\nabla_{X}\nabla_{Y}Z+\nabla_{Y}\nabla_{X}Z+\nabla_{[X,Y]}Z,
\end{equation}
and $\odot$ denotes the Kulkarni-Nomizu product:
\begin{equation}\label{eq:1.3}
\begin{split}
(S_{g}\odot g)(X,Y)Z&=g(S_{g}(X),Z)Y+g(X,Z)S_{g}(Y)\\
&\quad-g(Y,Z)S_{g}(X)-g(S_{g}(Y),Z)X,
\end{split}
\end{equation}
for all $X,Y,Z\in\mathfrak{X}(M)$. 

Related to the Schouten tensor, one can define the Cotton tensor by
\begin{equation}\label{eq:1.4}
C_g(X,Y)=\nabla S_{g}(Y,X)-\nabla S_{g}(X,Y),
\end{equation}
where $\nabla S_{g}:\mathfrak{X}(M)\times\mathfrak{X}(M)\to\mathfrak{X}(M)$ denotes the covariant differential of $S_{g}$,
\begin{equation}\label{eq:1.5}
\nabla S_{g}(X,Y)=\nabla_{Y}S_{g}(X)-S_{g}(\nabla_{Y}X),
\end{equation}
for all $X,Y\in\mathfrak{X}(M)$. Geometrically, the Cotton tensor measures the failure of the Schouten tensor to be a Codazzi tensor; that is, it vanishes if and only if $S_{g}$ is Codazzi.

As a byproduct of all this, our first result is the following general Weitzenb\"ock-type formula:
\begin{proposition}
Let $M^{n}$ be a Riemannian manifold. Then
\begin{equation}\label{eq:weitzenbock}
\begin{split}
\frac{1}{2}\Delta_{g}|S_{g}|^{2}&=|\nabla S_{g}|^{2}+{\rm tr}(S_{g}\circ\nabla^{2}\sigma_{1}(g))+(n-2){\rm tr}(B_{g}\circ S_{g})\\
&\quad+\frac{1}{n-2}\left(n\,{\rm tr}(S_{g}^{3})-\sigma_{1}(g)|S_{g}|^{2}\right)-2\sum_{i,j=1}^{n}g(W_{g}(e_{i},e_{j})S_{g}(e_{i}),S_{g}(e_{j})),
\end{split}
\end{equation}
where $\nabla^{2}\sigma_{1}(g)$ stands for the self-adjoint linear operator metrically equivalent to the Hessian of $\sigma_{1}(g)$.
\end{proposition}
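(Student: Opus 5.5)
We compute in a local orthonormal frame $\{e_i\}$, normal at the point, and write $S_{ij}$ for the components of $S_g$. The Bochner identity gives
\[
\tfrac12\Delta_g|S_g|^2=|\nabla S_g|^2+\sum_{i,j,k}S_{ij}S_{ij,kk},
\]
so everything reduces to rewriting the rough Laplacian $S_{ij,kk}$. Since $C_g$ measures the failure of $S_g$ to be Codazzi, we have $S_{ij,k}=S_{ik,j}+C_{ijk}$, with index conventions fixed by \eqref{eq:1.4}. Differentiating once more and summing over $k$ gives
\[
S_{ij,kk}=S_{ik,jk}+C_{ijk,k}.
\]
The term $C_{ijk,k}$ is a component of ${\rm div}\,C_g$. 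By the definition \eqref{def_of_bach}, it can be replaced by $(n-2)B_{ij}-\sum W(\cdot,e_j)S_g(\cdot)$, up to sign and index-order conventions that we check against \eqref{eq:1.4}. After contraction with $S_{ij}$, this produces the term $(n-2){\rm tr}(B_g\circ S_g)$ together with one Weyl contraction.

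\textbf{Commuting derivatives.} Next we apply the Ricci identity to move the $k$-derivative past the $j$-derivative:
\[
S_{ik,jk}=S_{ik,kj}+\sum_l\big(R\text{-terms}\big)_{ikjkl}\,S_{\cdot\cdot}.
\]
The sign is fixed by the convention \eqref{eq:1.14}. The contracted Bianchi identity, written in terms of the Schouten tensor, gives $S_{ik,k}=(\sigma_1(g))_{,i}$ because $\operatorname{div}S_g=d\sigma_1$; indeed ${\rm div}\,{\rm Ric}=\tfrac12 dR$ and $\sigma_1=\frac{n-2}{2(n-1)}R$. Hence $S_{ik,kj}=\sigma_{1,ij}$. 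Contracting with $S_{ij}$ yields ${\rm tr}(S_g\circ\nabla^2\sigma_1(g))$.

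\textbf{Curvature terms.} The curvature terms from the Ricci identity have the form
\[
\sum R_{kjil}S_{lk}+\sum R_{jl}S_{il},
\]
with $R_{jl}$ the Ricci tensor. We substitute the decomposition \eqref{eq:1.2}, $R=W_g+\frac{1}{n-2}S_g\odot g$, together with ${\rm Ric}_g=S_g+\frac{\sigma_1}{n-2}g$, which follows from $R_g=\frac{2(n-1)}{n-2}\sigma_1$. The Weyl part gives $-\sum g(W_g(e_i,e_j)S_g(e_i),S_g(e_j))$. The Kulkarni--Nomizu part, after contraction with $S_{ij}$, gives a combination of ${\rm tr}(S_g^3)$ and $\sigma_1|S_g|^2$. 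Adding the Ricci contribution $\sum S_{ij}S_{il}R_{jl}={\rm tr}S_g^3+\frac{\sigma_1}{n-2}|S_g|^2$ should produce exactly $\frac{1}{n-2}\big(n\,{\rm tr}(S_g^3)-\sigma_1|S_g|^2\big)$. The second Weyl contraction, coming from the Bach substitution, combines with the first to give the factor $-2$.

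\textbf{Main obstacle.} The conceptual steps are standard, so the real difficulty is bookkeeping of signs and index orders. The curvature sign convention \eqref{eq:1.14} is opposite to the common one, and the Cotton tensor \eqref{eq:1.4} and the Bach tensor \eqref{def_of_bach} have specific index placements. In particular, we must verify that the Weyl term hidden in $B_g$ has the same sign as the one from the commutator, so that the two add to give $-2$ rather than cancel. I would carry out the computation with explicit components and check the coefficients against a simple test case, such as a product $\mathbb{S}^1\times\mathbb{S}^{n-1}$. There $\nabla S_g=0$ and $B_g=0$, so the left side vanishes and the remaining algebraic terms must cancel. This pins down the coefficient $n/(n-2)$ and the factor $-2$.
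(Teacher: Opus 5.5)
Your proposal is correct and follows essentially the same route as the paper: the Bochner formula, rewriting the rough Laplacian via ${\rm div}\,C_g$ and the definition of $B_g$, the Ricci identity, ${\rm div}\,S_g=d\sigma_1(g)$ from the contracted Bianchi identity (which you justify more accurately than the paper's appeal to the trace commuting with $\nabla$), and the Weyl/Kulkarni--Nomizu decomposition. One caveat on your sanity check: $\mathbb{S}^1\times\mathbb{S}^{n-1}$ is conformally flat, so it cannot detect the factor $-2$; that sign has to come from the explicit computation, where the Weyl contribution from $B_g$ and the one from the commutator each equal $-\sum_{i,j}g(W_g(e_i,e_j)S_g(e_i),S_g(e_j))$, so they add rather than cancel.
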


\begin{proof}
First, we recall that the divergence of the Cotton tensor is defined by
\begin{equation}\label{div_of_C}
({\rm div}_gC_g)(X)=\sum_{i=1}^n(\nabla_{e_i}C_g)(e_i,X),
\end{equation}
where $\{e_1,\ldots,e_n\}$ is a local orthonormal frame on $M^n$. In particular, by standard tensorial properties, \eqref{div_of_C} can be written as follows:
\begin{equation}
({\rm div}_gC_g)(X)=\sum_{i=1}^n\left(\nabla^2S_g(X,e_i,e_i)-\nabla^2S_g(e_i,X,e_i)\right),
\end{equation}
where $\nabla^2S_g:\mathfrak{X}(M)\times\mathfrak{X}(M)\times\mathfrak{X}(M)\to\mathfrak{X}(M)$ denotes the second covariant derivative of $S_g$, defined by $\nabla^2S_g(X,Y,Z)=\nabla_Z(\nabla S_g(X,Y))$. Moreover, we also have
\begin{equation}\label{eq:1.10}
\frac{1}{2}\Delta_g|S_g|^2=g(\Delta S_g,S_g)+|\nabla S_g|^2,
\end{equation}
where $\Delta_g$ denotes the standard Laplacian operator with respect to the metric $g$, and $\Delta\,S_g:\mathfrak{X}(M)\to\mathfrak{X}(M)$ is the rough Laplacian given by
\begin{equation}\label{eq:1.11}
\Delta\,S_g(X):={\rm tr}\!\left(\nabla^2S_g(X,\cdot,\cdot)\right)
=\sum_{i=1}^n\nabla^2S_g(X,e_i,e_i).
\end{equation}
By substituting this into the previous identity, we obtain
\begin{equation}
\begin{split}
({\rm div}\,C_g)(X)=\Delta\,S_g(X)-\sum_{i=1}^n\Big(\nabla^2S_g(e_i,e_i,X)-R(e_i,X)S_g(e_i)+S_g(R(e_i,X)e_i)\Big),
\end{split}
\end{equation}
where we have used the Ricci identity:
\begin{equation}\label{eq:1.13}
\nabla^2S_g(X,Y,Z)=\nabla^2S_g(X,Z,Y)-R(Z,Y)S_g(X)+S_g(R(Z,Y)X).
\end{equation}
Combining the above identities, and using~\eqref{def_of_bach}, we obtain
\begin{equation}
\begin{split}
(n-2)B_g(X)&=\Delta S_g(X)-\sum_{i=1}^n\Big(\nabla^2S_g(e_i,e_i,X)-R(e_i,X)S_g(e_i)+S_g(R(e_i,X)e_i)\Big)\\
&\quad+\sum_{i=1}^nW_g(e_i,X)S_g(e_i).
\end{split}
\end{equation}
Then, from the previous relations, we have
\begin{equation}
\begin{split}
g(\Delta S_g,S_g)&=\sum_{i,j=1}^n g(\nabla^2S_g(e_i,e_i,e_j),S_g(e_j))+(n-2)\,{\rm tr}(B_g\circ S_g)\\
&\quad-\sum_{i,j=1}^n\Big(g(R(e_i,e_j)S_g(e_i),S_g(e_j))-g(S_g(R(e_i,e_j)e_i),S_g(e_j))\Big)\\
&\quad-\sum_{i,j=1}^n g(W_g(e_i,e_j)S_g(e_i),S_g(e_j)).
\end{split}
\end{equation}
Thus, by using the Weyl curvature decomposition~\eqref{eq:1.2}, it follows that
\begin{equation}
\begin{split}
-\sum_{i,j=1}^n &\Big(g(R(e_i,e_j)S_g(e_i),S_g(e_j))-g(S_g(R(e_i,e_j)e_i),S_g(e_j))\Big)\\
&=-\sum_{i,j=1}^n\Big(g(W_g(e_i,e_j)S_g(e_i),S_g(e_j))-g(S_g(W_g(e_i,e_j)e_i),S_g(e_j))\Big)\\
&\quad+\frac{1}{n-2}\Big(n\,{\rm tr}(S_g^3)-\sigma_1(g)|S_g|^2\Big).
\end{split}
\end{equation}
Since $W_g$ is trace-free, we have $\sum_{i,j=1}^n S_g(W_g(e_i,e_j)e_i)=0$. Hence,
\begin{equation}
\begin{split}
-\sum_{i,j=1}^n\Big(&g(R(e_i,e_j)S_g(e_i),S_g(e_j))-g(S_g(R(e_i,e_j)e_i),S_g(e_j))\Big)\\
&=-\sum_{i,j=1}^n g(W_g(e_i,e_j)S_g(e_i),S_g(e_j))+\frac{1}{n-2}\Big(n\,{\rm tr}(S_g^3)-\sigma_1(g)|S_g|^2\Big).
\end{split}
\end{equation}
Finally, using the fact that the trace commutes with the Levi-Civita connection, it is easy to check that
\begin{equation}
\sum_{i=1}^n\nabla^2 S_g(e_i,e_i,X)=\nabla_X \nabla\,{\rm tr}_g(S_g)=\nabla_X \nabla\,\sigma_1(g).
\end{equation}
Therefore, by combining the above equations, we obtain~\eqref{eq:weitzenbock}.
\end{proof}

Next, let us consider $P_{g}$, the first Newton transformation of $S_{g}$. That is, $P_{g}:\mathfrak{X}(M)\to\mathfrak{X}(M)$ is the operator given by
\begin{equation}\label{eqnewton:1.16}
P_g=\sigma_1(g)\,g-S_g.
\end{equation}
We define the $\sigma_2$-curvature by the following relation:
\begin{equation}
\sigma_2(g)=\frac{1}{2}\,{\rm tr_g}(P_g\circ S_g),
\end{equation}
and, in particular, we have the identity
\begin{equation}\label{eq:1.16}
\sigma_1(g)^2=|S_g|^2+2\sigma_2(g).
\end{equation}
Associated with the first Newton transformation, we define the second-order differential operator
\begin{equation}\label{eq:1.17}
\square_g(u)={\rm tr}(P_g\circ\nabla^2u), \qquad u\in\mathcal{C}^2(M).
\end{equation}
It is not difficult to see that $\square_g$ is elliptic if and only if $P_g$ is positive definite (see~\cite[Lemma 2.4]{Santos:24}). In particular, as observed in~\cite{Boris:14}, the first Newton transformation of $S_{g}$ is (up to sign) the Einstein tensor. By a standard tensor computation, we have
\begin{equation}\label{eqLgeneral2:1.18}
\square_g(u)=\mathrm{div}(P_g(\nabla u))-g(\mathrm{div}\,P_g,\nabla u),
\end{equation}
for every function $u\in\mathcal{C}^2(M)$, where
\begin{equation}\label{eq:1.19}
\mathrm{div}\,P_g=\mathrm{tr}_g(\nabla P_g).
\end{equation}
Since the Einstein tensor is divergence-free, that is, $\mathrm{div}\,P_g=0$, we obtain
\begin{equation}\label{eqLgeneral3:1.20}
\square_g(u)=\mathrm{tr}(P_g\circ\nabla^2u)=\mathrm{div}(P_g(\nabla u)),
\end{equation}
and thus the operator $\square_g$ can be regarded as a divergence-type operator. Moreover, for all $u,v\in\mathcal{C}^2(M)$, we have
\begin{equation}\label{eq:1.21}
\square_g(uv)=u\square_g(v)+v\square_g(u)+2\,g(P_g(\nabla u),\nabla v).
\end{equation}

By taking $u=\sigma_1(g)$ in~\eqref{eqLgeneral3:1.20}, we obtain
\begin{align}\label{eq:1.22}
\square_g(\sigma_1(g))&={\rm tr}(P_g\circ\nabla^2\sigma_1(g))\\
&=\sigma_1(g)\,{\rm tr}(\nabla^2\sigma_1(g))-{\rm tr}(S_g\circ\nabla^2\sigma_1(g))\nonumber\\
&=\frac{1}{2}\Delta\big(\sigma_1(g)^2\big)-|\nabla\sigma_1(g)|^2-{\rm tr}(S_g\circ\nabla^2\sigma_1(g)).
\end{align}
Thus, by combining equations~\eqref{eq:weitzenbock},~\eqref{eq:1.10} and ~\eqref{eq:1.16} with~\eqref{eq:1.22}, we obtain
\begin{equation}\label{eq:1.23}
\begin{split}
\square_g(\sigma_1(g))&=\Delta\sigma_2(g)+|\nabla S_g|^2-|\nabla\sigma_1(g)|^2+(n-2)\,{\rm tr}(B_g\circ S_g)\\
&\quad+\frac{1}{n-2}\Big(n\,{\rm tr}(S_g^3)-\sigma_1(g)|S_g|^2\Big)-2\sum_{i,j=1}^n g(W_g(e_i,e_j)S_g(e_i),S_g(e_j)).
\end{split}
\end{equation}

On the other hand, we consider the traceless Ricci tensor
\begin{equation}\label{eq:1.7}
E_g:={\rm Ric_g}-\frac{R_g}{n}g=S_g-\frac{1}{n}\sigma_1(g)\,g,
\end{equation}
where $\sigma_1(g):={\rm tr}(S_g)=\frac{n-2}{2(n-1)}R_g$. Then ${\rm tr}(E_g)=0$, and
\begin{equation}\label{eq:1.8}
|E_g|^2=|S_g|^2-\frac{1}{n}\sigma_1(g)^2\geq0.
\end{equation}
Thus, $E_g=0$ is equivalent to the fact that $(M^n,g)$ is Einstein. By using this, we can write
\begin{equation}\label{eq:1.27}
S_g=E_g+\frac{1}{n}\sigma_1(g)\,g.
\end{equation}
Consequently,
\begin{equation}\label{eq:1.28}
n\,{\rm tr}(S_g^3)=n\,{\rm tr}(E_g^3)+3\sigma_1(g)|E_g|^2+\frac{\sigma_1(g)^3}{n},
\end{equation}
and
\begin{equation}\label{eq:1.29}
\sigma_1(g)|S_g|^2=\sigma_1(g)|E_g|^2+\frac{\sigma_1(g)^3}{n}.
\end{equation}
Thus, from~\eqref{eq:1.28} and~\eqref{eq:1.29}, we obtain
\begin{equation}\label{eq:1.30}
n\,{\rm tr}(S_g^3)-\sigma_1(g)|S_g|^2=n\,{\rm tr}(E_g^3)+2\sigma_1(g)|E_g|^2.
\end{equation}
Moreover, since $W_g$ and $B_g$ are traceless, it follows from~\eqref{eq:1.27} that
\begin{equation}
{\rm tr}_g(B_g\circ S_g)={\rm tr}_g(B_g\circ E_g),
\end{equation}
and
\begin{equation}\label{eq:trW}
\sum_{i,j=1}^{n}g(W_{g}(e_{i},e_{j})S_{g}(e_{i}),S_{g}(e_j))=\sum_{i,j=1}^{n}g(W_{g}(e_{i},e_{j})E_{g}(e_{i}),E_{g}(e_j))
\end{equation}
Therefore, inserting~\eqref{eq:1.30} into~\eqref{eq:1.23}, we obtain
\begin{corollary}\label{prop:1.1}
Let $(M^{n},g)$ be a Riemannian manifold. Then
\begin{equation}\label{eq:1.31}
\begin{split}
\square_{g}(\sigma_1(g))&=\Delta \sigma_{2}(g)+|\nabla S_g |^{2}-|\nabla\sigma_1(g)|^2+(n-2){\rm tr}(B_{g}\circ E_{g})\\
&\quad+\dfrac{1}{n-2}\left(n{\rm tr} (E_g ^{3})+2\sigma_1(g)|E_g |^{2}\right)-2\sum_{i,j=1}^{n}g(W_{g}(e_{i},e_{j})E_{g}(e_{i}),E_{g}(e_{j})).
\end{split}
\end{equation}
\end{corollary}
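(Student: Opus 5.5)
The corollary follows by substitution into the identity \eqref{eq:1.23}, which is already established. That identity came from combining the Weitzenb\"ock formula \eqref{eq:weitzenbock} with \eqref{eq:1.22}, via \eqref{eq:1.10} and \eqref{eq:1.16}. All that remains is to rewrite its three curvature terms, now expressed through $S_g$, in terms of the traceless Ricci tensor $E_g$.

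The plan has three steps.

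\emph{First term.} Substitute the decomposition \eqref{eq:1.27}, $S_g=E_g+\frac{1}{n}\sigma_1(g)\,g$, into ${\rm tr}(S_g^3)$ and $|S_g|^2$. Expanding the cube and using ${\rm tr}(E_g)=0$ gives \eqref{eq:1.28} and \eqref{eq:1.29}. Subtracting them yields \eqref{eq:1.30}, and this replaces the cubic term in \eqref{eq:1.23}.

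\emph{Bach term.} Write ${\rm tr}(B_g\circ S_g)={\rm tr}(B_g\circ E_g)+\frac{1}{n}\sigma_1(g)\,{\rm tr}(B_g)$. The last term vanishes because the Bach tensor is traceless. Tracelessness follows from \eqref{def_of_bach}: ${\rm tr}\,{\rm div}\,C_g$ vanishes by the skew-symmetry and trace-freeness of $C_g$, and the trace of $W_g(\cdot,X)S_g(\cdot)$ vanishes by the trace-freeness of $W_g$.

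\emph{Weyl term.} Expand $\sum_{i,j} g(W_g(e_i,e_j)S_g(e_i),S_g(e_j))$ bilinearly after inserting \eqref{eq:1.27}. The pure $E_g$ term survives. Each term containing a factor $\frac1n\sigma_1(g)\,g$ produces a contraction of $W_g$ over a pair of slots that is either a Ricci-type trace or a skew pair: for example $\sum_i g(W_g(e_i,e_j)e_i,\cdot)$, or $\sum_{i,j}g(W_g(e_i,e_j)e_i,e_j)$, or $g(W_g(e_i,e_j)E_g(e_i),e_j)$, which equals a trace of $W_g$ against $E_g$. All of these vanish, giving \eqref{eq:trW}.

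Finally, substitute these three identities into \eqref{eq:1.23} to obtain \eqref{eq:1.31}.

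The only point needing care is the Weyl term. One must check, using the pair symmetries of $W_g$, that every mixed contraction is genuinely a Ricci-type trace. For example, $g(W_g(e_i,e_j)e_i,E_g(e_j))$ is the contraction of the first and third slots, which vanishes. I expect this step to be the main obstacle, though still routine. The rest is direct algebra.
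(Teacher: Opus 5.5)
Your proposal is correct and follows the paper's argument: substitute $S_g=E_g+\frac{1}{n}\sigma_1(g)\,g$ into \eqref{eq:1.23}, use \eqref{eq:1.30} for the cubic term, and use the tracelessness of $B_g$ and $W_g$ to replace $S_g$ by $E_g$ in the Bach and Weyl terms. You also supply details the paper only asserts: the vanishing of ${\rm tr}(B_g)$, via trace-freeness of $C_g$ (which rests on the contracted Bianchi identity ${\rm div}\,S_g=\nabla\sigma_1(g)$) and of $W_g$, and the explicit vanishing of the mixed Weyl contractions.
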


\section{A fundamental result}

Following~\cite{Santos:24}, we shall henceforth work with \emph{$\mathcal{A}_{2}$-manifolds}. Recall that a Riemannian manifold $M^{n}$ is called an $\mathcal{A}_{2}$-manifold if both $\sigma_{1}(g)$ and $\sigma_{2}(g)$ are nonnegative, and $\sigma_{2}(g)$ is constant. Typical examples of $\mathcal{A}_{2}$-manifolds include Einstein manifolds with nonnegative scalar curvature. A wide class of examples can be constructed as follows: let $N^{n-1}$ be an Einstein manifold with nonnegative scalar curvature $R_{N}$, and consider $M^{n}=\Sigma^{1}\times_{\rho}N^{n-1}$ a Riemannian warped product endowed with the metric
\begin{equation}
g=dt^{2}+\rho(t)^{2}g_{N^{n-1}},
\end{equation}
where $\Sigma^{1}$ is a $1$-dimensional manifold and $\rho:\Sigma^{1}\to(0,+\infty)$ is a smooth positive function. From the standard formulas for warped products (see, e.g., \cite[Corollary 43]{ONeill:83}),
\begin{equation}\label{eq:ricci-warped}
{\rm Ric}_{g}(\partial_{t},\partial_{t})=-(n-1)\frac{\rho''}{\rho}
\quad\text{and}\quad
{\rm Ric}_{g}(X,Y)=\frac{1}{\rho^{2}}\Bigl((n-2)(\kappa-(\rho')^{2})-\rho\rho''\Bigr)g(X,Y),
\end{equation}
where $R_{N}=(n-1)(n-2)\kappa$, 
for all $X,Y\in\mathfrak{X}(N^{n-1})$. Hence, the scalar curvature of $M^{n}$ is
\begin{equation}\label{eq:scalar}
R_{g}=\frac{n-1}{\rho^{2}}\Bigl((n-2)(\kappa-(\rho')^{2})-2\rho\rho''\Bigr).
\end{equation}
By combining~\eqref{eq:ricci-warped} and~\eqref{eq:scalar} with~\eqref{eq:1.1}, we obtain
\begin{equation}\label{eq:schouten-components}
S_{g}(\partial_{t},\partial_{t})=-\frac{(n-2)}{2\rho^{2}}(2\rho\rho''+\kappa-(\rho')^{2})
\quad\text{and}\quad
S_{g}(X,Y)=\frac{(n-2)}{2\rho^{2}}(\kappa-(\rho')^{2})g(X,Y),
\end{equation}
for all $X,Y\in\mathfrak{X}(N^{n-1})$. Therefore, $S_{g}$ has one eigenvalue of multiplicity $n-1$ and another of multiplicity $1$, namely,
\begin{equation}\label{eq:eigenvalues}
\mu=\frac{(n-2)}{2\rho^{2}}(\kappa-(\rho')^{2})
\quad\text{and}\quad
\lambda=-\frac{(n-2)}{2\rho^{2}}(2\rho\rho''+\kappa-(\rho')^{2}).
\end{equation}
Thus,
\begin{equation}\label{eq:sigma1}
\sigma_{1}(g)=\sum_{i}\lambda_{i}=\lambda+(n-1)\mu=-\frac{(n-2)}{2\rho^{2}}\bigl(2\rho\rho''+(n-2)((\rho')^{2}-\kappa)\bigr),
\end{equation}
and
\begin{equation}\label{eq:sigma2}
\begin{split}
\sigma_{2}(g)&=\sum_{i<j}\lambda_{i}\lambda_{j}=(n-1)\mu\lambda+\binom{n-1}{2}\mu^{2}\\
&=\frac{(n-1)(n-2)^{2}}{8\rho^{4}}((\rho')^{2}-\kappa)\bigl(4\rho\rho''+(n-4)((\rho')^{2}-\kappa)\bigr).
\end{split}
\end{equation}
By assuming that $\rho''\leq0$, $|\rho'|\leq\sqrt{\kappa}$, and that $\rho$ is a solution of
\begin{equation}\label{eq:A2-ODE}
((\rho')^{2}-\kappa)\bigl(4\rho\rho''+(n-4)((\rho')^{2}-\kappa)\bigr)-C\rho^{4}=0,
\end{equation}
for some $C\geq0$, we conclude that $M^{n}$ is an $\mathcal{A}_{2}$-manifold.

In the sequel, we establish an analytic estimate satisfied by the traceless Ricci tensor on such manifolds, which will play a key role in our study of $\mathcal{A}_2$-manifolds.
\begin{proposition}\label{prop:3.1}
Let $(M^{n},g)$ be a Bach-flat $\mathcal{A}_{2}$-manifold. Then
\begin{equation}\label{eq:3.1}
\square_{g}(|E_{g}|^{2})\geq2\sqrt{|E_{g}|^{2}+2\sigma_{2}(g)}\,|E_{g}|^{2}\,\varphi(|E_{g}|,|W_{g}|),
\end{equation}
where $\varphi(x,y)$ is the following function of two variables:
\begin{equation}\label{eq:def_varphi}
\varphi(x,y)=-x+\frac{2}{n-2}\sqrt{x^{2}+2\sigma_{2}(g)}-\sqrt{\frac{2(n-2)}{n}}\, y.
\end{equation}
Moreover, assume that $M^n$ has positive $\sigma_2$-curvature. If equality holds in~\eqref{eq:3.1} at a point, then $E_g$ has exactly two distinct eigenvalues, one of multiplicity $(n-1)$ and the other of multiplicity $1$, and $W_g=0$ at that point.
\end{proposition}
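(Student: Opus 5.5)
Throughout, write $\sigma_1=\sigma_1(g)$, $\sigma_2=\sigma_2(g)$, $E=E_g$, $W=W_g$, $S=S_g$. The plan is to express $\square_g(|E|^2)$ through Corollary~\ref{prop:1.1}, exploiting the structure of an $\mathcal{A}_2$-manifold. Since $\sigma_2$ is constant, \eqref{eq:1.16} gives $\sigma_1^2=|S|^2+2\sigma_2$. Using \eqref{eq:1.8} this becomes $\frac{n-1}{n}\sigma_1^2=|E|^2+2\sigma_2$, so $|E|^2=\frac{n-1}{n}\sigma_1^2-2\sigma_2$. Hence
\[
\square_g(|E|^2)=\tfrac{n-1}{n}\square_g(\sigma_1^2)=\tfrac{2(n-1)}{n}\left(\sigma_1\square_g(\sigma_1)+g(P_g\nabla\sigma_1,\nabla\sigma_1)\right),
\]
by \eqref{eq:1.21}. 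Also $\sigma_1=\sqrt{\tfrac{n}{n-1}}\sqrt{|E|^2+2\sigma_2}$, using $\sigma_1\ge0$.

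Next, insert \eqref{eq:1.31} with $\Delta\sigma_2=0$ and $B_g=0$. This gives
\[
\square_g(\sigma_1)=|\nabla S|^2-|\nabla\sigma_1|^2+\tfrac{1}{n-2}\left(n\,{\rm tr}(E^3)+2\sigma_1|E|^2\right)-2\sum g(W(e_i,e_j)E e_i,E e_j).
\]
The gradient terms have to be shown nonnegative. Because $|\nabla S|^2=|\nabla E|^2+\frac1n|\nabla\sigma_1|^2$, the goal is a Kato-type inequality $|\nabla E|^2\ge\frac{n-1}{n}|\nabla\sigma_1|^2$, combined with the term $\frac{1}{\sigma_1}g(P_g\nabla\sigma_1,\nabla\sigma_1)$. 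I would try the following. Differentiate $\sigma_1^2-|S|^2=2\sigma_2$ to get $P_g(\nabla\sigma_1)=\sigma_1\nabla\sigma_1-S\nabla\sigma_1$, and combine this with $|\nabla\sigma_1|\,\sigma_1\frac{n-1}{n}=|\langle\nabla E,E\rangle|\le|\nabla E||E|$. Since $|E|\le\sqrt{\frac{n-1}{n}}\,\sigma_1$, this yields $|\nabla E|^2\ge\frac{n-1}{n}|\nabla\sigma_1|^2$. If this route fails, I would fall back on the standard Gårding-type argument for $\sigma_2$-constant metrics, namely $|\nabla S|^2\ge|\nabla\sigma_1|^2$ whenever $\sigma_1>0$ and $\sigma_2\ge0$, together with $P_g\ge0$ in the positive cone. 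The outcome is $\square_g(|E|^2)\ge\frac{2(n-1)}{n}\sigma_1\cdot(\text{zero-order terms})$.

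For the zero-order terms, apply the Okumura-type inequality ${\rm tr}(E^3)\ge-\frac{n-2}{\sqrt{n(n-1)}}|E|^3$. For the Weyl term, use the Huisken/Catino estimate
\[
\Big|\sum W(e_i,e_j,e_i,e_j)\text{-contraction}\Big|\le\sqrt{\tfrac{n-2}{2(n-1)}}|W||E|^2 .
\]
Substituting $\sigma_1$ as a function of $|E|$ and collecting the factor $|E|^2\sqrt{|E|^2+2\sigma_2}$, the constants should reduce to $2\sqrt{|E|^2+2\sigma_2}\,|E|^2\varphi$. This requires checking that $\frac{2(n-1)}{n}\cdot\frac{n}{n-2}\cdot\frac{n-2}{\sqrt{n(n-1)}}\sqrt{\tfrac{n-1}{n}}^{-1}$ matches the coefficient $-1$ on $x$ in $\varphi$, and similarly for the $W$ coefficient $\sqrt{2(n-2)/n}$. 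This bookkeeping, together with the correct gradient inequality, is the main obstacle.

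For the equality case, assume $\sigma_2>0$, so that $\sigma_1>0$. Equality in the Okumura inequality forces $E$ to have exactly two eigenvalues with multiplicities $n-1$ and $1$ (when $E\neq0$). Equality in the Weyl estimate, which occurs for that eigenvalue structure only when $W=0$, together with the eigen-structure, forces $W=0$ at the point.
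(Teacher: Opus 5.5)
Your proposal is correct and follows essentially the paper's proof. It uses the same identity $\square_g(|E_g|^2)=\frac{2(n-1)}{n}\bigl(\sigma_1\square_g(\sigma_1)+g(P_g\nabla\sigma_1,\nabla\sigma_1)\bigr)$, then Corollary~\ref{prop:1.1} with $B_g=0$, nonnegativity of the gradient terms (using $P_g\geq 0$), Okumura's lemma, and Huisken's Weyl estimate, and the constants do reduce to $2\sqrt{|E_g|^2+2\sigma_2}\,|E_g|^2\varphi$ as you expect.

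There are two minor differences. First, you derive $|\nabla S_g|^2\geq|\nabla\sigma_1|^2$ yourself by Cauchy--Schwarz on the differentiated constraint $|E_g|^2=\frac{n-1}{n}\sigma_1^2-2\sigma_2$, whereas the paper imports it from its earlier $\mathcal{A}_2$ work. Second, in the equality case you should supply the one-line reason the paper gives: $\sum_{i,j,k,l}W_{ijkl}E_{ik}E_{jl}=0$ whenever $E_g=\lambda g+(\mu-\lambda)v^{\ast}\otimes v^{\ast}$, by trace-freeness and antisymmetry of $W_g$. Equality in Huisken's bound then reads $0=\sqrt{\frac{n-2}{2(n-1)}}\,|W_g|\,|E_g|^2$, which forces $W_g=0$ when $E_g\neq 0$.
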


In order to prove Proposition~\ref{prop:3.1}, we will need the classical algebraic lemma due to Okumura~\cite{Okumura:74}, 
with the equality case later completed independently by Xu~\cite{Xu:93} and by Alencar and do Carmo~\cite{Alencar:94}.
\begin{lemma}\label{lem:3.1}
Let $\mu_{1},\ldots,\mu_{n}$ be real numbers such that 
$\sum_{i}\mu_{i}=0$ and $\sum_{i}\mu_{i}^{2}=\beta^{2}$, where $\beta\ge0$. 
Then
\begin{equation*}
-\frac{n-2}{\sqrt{n(n-1)}}\,\beta^{3}\leq\sum_{i}\mu_{i}^{3}\leq\frac{n-2}{\sqrt{n(n-1)}}\,\beta^{3},
\end{equation*}
and equality holds if and only if at least $(n-1)$ of the numbers $\mu_{i}$ are equal.
\end{lemma}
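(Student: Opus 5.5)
The proof reduces to a constrained extremal problem on a compact set, which we solve with Lagrange multipliers. If $\beta=0$ then all $\mu_i=0$ and everything is trivial, including the equality case (all numbers are equal). So assume $\beta>0$. By homogeneity (replace $\mu_i$ by $\mu_i/\beta$; both sides of the inequality are cubic in $\beta$) we may assume $\beta=1$. The problem is then to find the maximum and minimum of $F(\mu)=\sum_i\mu_i^3$ on the compact set
\[
K=\Big\{\mu\in\mathbb{R}^n:\ \textstyle\sum_i\mu_i=0,\ \sum_i\mu_i^2=1\Big\}.
\]
Since $F(-\mu)=-F(\mu)$ and $K$ is symmetric, it suffices to treat the maximum.

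\emph{Critical points.} On $K$ the gradients $(1,\dots,1)$ and $2\mu$ of the constraints are linearly independent: otherwise $\mu$ would be constant, hence $0$, contradicting $\sum\mu_i^2=1$. Thus $K$ is a smooth submanifold. At any point of $K$ where $F$ attains its maximum there are multipliers $a,b$ with $3\mu_i^2=a+2b\mu_i$ for all $i$. Each $\mu_i$ is therefore a root of one fixed quadratic polynomial, so the $\mu_i$ take at most two values. They cannot all be equal, so they take exactly two values: $x$ with multiplicity $k$ and $y$ with multiplicity $n-k$, where $1\le k\le n-1$. The constraints
\[
kx+(n-k)y=0,\qquad kx^2+(n-k)y^2=1
\]
give, up to the global sign,
\[
x=\sqrt{\frac{n-k}{nk}},\qquad y=-\sqrt{\frac{k}{n(n-k)}}.
\]
A direct computation then yields
\[
F=\pm\frac{n-2k}{\sqrt{n\,k(n-k)}}.
\]

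\emph{Optimizing over $k$.} Set $f(t)=(n-2t)/\sqrt{t(n-t)}$ for $t\in(0,n)$. I would check that $f$ is strictly decreasing by differentiating, or more simply by writing $f(t)^2=\frac{(n-2t)^2}{t(n-t)}=\frac{n^2}{t(n-t)}-4$ and using that $t(n-t)$ is strictly increasing on $(0,n/2]$ while $f>0$ there, together with the antisymmetry $f(n-t)=-f(t)$. Hence over $k\in\{1,\dots,n-1\}$ the largest value of $|F|$ is attained exactly at $k=1$ or $k=n-1$, and equals
\[
\frac{n-2}{\sqrt{n(n-1)}}.
\]
This proves the two-sided inequality.

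\emph{Equality case.} I expect this to be the only delicate point, and it is handled by the same argument. If $\sum\mu_i^3$ equals $\frac{n-2}{\sqrt{n(n-1)}}\beta^3$, then the normalized point is a global maximum of $F$ on $K$, hence a critical point. By the analysis above it takes exactly two values with multiplicity $k$, and strict monotonicity of $f$ forces $k\in\{1,n-1\}$. So at least $n-1$ of the $\mu_i$ coincide. The same holds at the lower bound, using the minimum of $F$. Conversely, if $n-1$ of the numbers are equal, the constraints force exactly the configuration computed above with $k=1$ or $k=n-1$, or all $\mu_i=0$ when $\beta=0$. In either case one of the two bounds is attained, which completes the proof of Lemma~\ref{lem:3.1}.
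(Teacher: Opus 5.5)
Your proof is correct and complete: the Lagrange-multiplier reduction to two-valued configurations, the critical values $\pm\frac{n-2k}{\sqrt{n\,k(n-k)}}$, and the strict monotonicity of $t\mapsto (n-2t)/\sqrt{t(n-t)}$ together give both the two-sided bound and the equality characterization. The paper does not prove this lemma itself; it cites Okumura for the inequality and Xu and Alencar--do Carmo for the equality case, and your argument is essentially the classical Lagrange-multiplier proof found in that literature.
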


\begin{proof}[\underline{Proof of Proposition~\ref{prop:3.1}}]
Since $(M^{n},g)$ is a $\mathcal{A}_{2}$-manifold, it follows from~\eqref{eq:1.16} and~\eqref{eq:1.8} that
\begin{equation}\label{eq:1.33}
\dfrac{1}{2}\square_{g}(|E_{g}|^2)=\dfrac{n-1}{n}\sigma_1(g)\square_{g}(\sigma_1(g))+\dfrac{n-1}{n}g\left(P_g(\nabla\sigma_1(g)),\nabla\sigma_1(g)\right),
\end{equation}
From~\cite[Lemma 2.4]{Santos:24}, we know that $P_{g}$ is positive semi-definite. Thus, as by using Corollary~\ref{prop:1.1} with $B_{g}=0$, we get
\begin{equation}\label{eq:3.6}
\begin{split}
\dfrac{n}{2(n-1)}\square_g(|E_g|^2)&\geq\sigma_1(g) \square_{g}(\sigma_1(g))\\
&=\sigma_1(g)\left(|\nabla S_g|^2-|\nabla\sigma_1|^2\right)+\dfrac{\sigma_{1}}{n-2}\left(n\text{tr}(E_g^3)+2\sigma_1|E_g|^2\right)\\
&\quad-2\sigma_{1}(g)\sum_{i,l=1}^nW_g(e_i,e_l,E_g(e_i),E_g(e_l)).
\end{split}
\end{equation}
By using again~\cite[Lemma 2.4]{Santos:24}, we know that
\begin{equation}
\sigma_1(g)^{2}\left(|\nabla S_g|^2-|\nabla\sigma_1(g)|^2\right)\geq2\sigma_{2}(g)|\nabla S_{g}|^{2}.
\end{equation}
In particular, as $M^{n}$ is a $\mathcal{A}_{2}$-manifold,
\begin{equation}\label{eq:3.7}
|\nabla S_g|^2-|\nabla\sigma_1(g)|^2\geq0,
\end{equation}
and consequently,
\begin{equation}\label{eq:3.2}
\begin{split}
\dfrac{n}{2(n-1)}\square_g(|E_g|^2)&\geq\dfrac{\sigma_{1}(g)}{n-2}\left(n\text{tr}(E_g^3)+2\sigma_1|E_g|^2\right)\\
&\quad-2\sigma_{1}(g)\sum_{i,l=1}^nW_g(e_i,e_l,E_g(e_i),E_g(e_l)).
\end{split}
\end{equation}
Since $n\geq3$, we may use Lemma~\ref{lem:3.1} to estimate $\text{tr}(E_g^3)$ as follows
\begin{equation}\label{eq:okumura}
|\text{tr}(E_g^3)|\leq\dfrac{n-2}{\sqrt{n(n-1)}}|E_{g}|^{3},
\end{equation}
and then
\begin{equation}
n\sigma_{1}(g)\text{tr}(E_g^3)\geq-n\sigma_{1}(g)|\text{tr}(E_g^3)|\geq-\dfrac{n(n-2)}{\sqrt{n(n-1)}}\sigma_{1}(g)|E_{g}|^{3}.
\end{equation}
By inserting this into~\eqref{eq:3.2} gives
\begin{equation}\label{eq:3.3}
\begin{split}
\dfrac{n}{2(n-1)}\square_g(|E_g|^2)&\geq\dfrac{\sigma_{1}(g)}{n-2}\left(-\dfrac{n(n-2)}{\sqrt{n(n-1)}}|E_{g}|^{3}+2\sigma_1(g)|E_g|^2\right)\\
&\quad-2\sigma_{1}(g)\sum_{i,l=1}^nW_g(e_i,e_l,E_g(e_i),E_g(e_l)).
\end{split}
\end{equation}
Besides this, from~\eqref{eq:1.16} and~\eqref{eq:1.8} we get
\begin{equation}
\sigma_{1}^{2}(g)=\dfrac{n}{n-1}(|E_{g}|^{2}+2\sigma_{2}(g))\quad\mbox{and}\quad\sigma_{1}(g)=\sqrt{\dfrac{n}{n-1}}\sqrt{|E_{g}|^{2}+2\sigma_{2}(g)}.
\end{equation}
By replacing this in~\eqref{eq:3.3},
\begin{equation}\label{eq:3.4}
\begin{split}
\square_g(|E_g|^2)&\geq\dfrac{2}{n-2}\sqrt{|E_{g}|^{2}+2\sigma_{2}(g)}\left(-(n-2)|E_{g}|+2\sqrt{|E_{g}|^{2}+2\sigma_{2}(g)}\right)|E_{g}|^{2}\\
&\quad-\dfrac{4(n-1)}{n}\sqrt{\dfrac{n}{n-1}}\sqrt{|E_{g}|^{2}+2\sigma_{2}(g)}\sum_{i,l=1}^nW_g(e_i,e_l,E_g(e_i),E_g(e_l)).
\end{split}
\end{equation}
For the last term, we may estimate by using~\cite[Lemma 3.4]{Huisken:85}
\begin{equation}\label{eq:3.5}
\begin{split}
-\sum_{i,l=1}^nW_g(e_i,e_l,E_g(e_i),E_g(e_l))&\geq-\sum_{i,l=1}^{n}|W_g(e_i,e_l,E_g(e_i),E_g(e_l))|\\
&\geq-\sqrt{\dfrac{n-2}{2(n-1)}}|E_{g}|^{2}|W_{g}|.
\end{split}
\end{equation}
Finally, by inserting~\eqref{eq:3.5} in~\eqref{eq:3.4}, we get~\eqref{eq:3.1}.

Now, let us analyze when the equality in~\eqref{eq:3.1} holds. Thus, the equality~\eqref{eq:3.6}
\begin{equation}
\dfrac{1}{2}\square_{g}(|E_{g}|^2)=\dfrac{n-1}{n}\sigma_1(g) \square_{g}(\sigma_1(g))
\end{equation}
gives that
\begin{equation}
g(P_{g}(\nabla\sigma_{1}(g)),\sigma_{1}(g))=0
\end{equation}
Since $M^{n}$ is an $\mathcal{A}_{2}$-manifold with positive $\sigma_{2}$-curvature,~\cite[Lemma 3.4]{Santos:24} ensures that $P_{g}$ is positive definite, hence $\sigma_{1}(g)$ is constant. Consequently, \eqref{eq:3.7} holds as an equality. Since $\sigma_{1}(g)$ is constant, we have $\nabla S_{g}=0$, and thus the Ricci tensor is parallel. Equality in~\eqref{eq:okumura} then implies that $E_{g}$ has two distinct eigenvalues of multiplicities $n-1$ and $1$. Therefore, at each $p\in M$, $T_{p}M = M_{\lambda} \oplus M_{\mu}$, with $\dim M_{\lambda}=n-1$ and $\dim M_{\mu}=1$, where $E_{g}|_{M_{\lambda}}=\lambda g$ and $E_{g}(v,v)=\mu$ for any unit $v\in M_{\mu}$. Hence,
\begin{equation}\label{eq:}
E_{g}=\lambda\,g+(\mu-\lambda)\,v^{\ast}\otimes v^{\ast},
\end{equation}
where $v^{\ast}$ denotes the $1$-form dual to the vector field $v$. 

By using~\eqref{eq:}, we compute
\begin{equation}
\begin{split}
\sum_{i,l}W_{g}(e_i,e_l,E_{g}(e_i),E_{g}(e_l))&=\lambda^{2}\!\sum_{i,j,k,l}W_{g}(e_i,e_k,e_j,e_l)g(e_i,e_j)g(e_k,e_l)\\
&\quad+2\lambda(\mu-\lambda)\!\sum_{i,j,k,l}W_{g}(e_i,e_k,e_j,e_l)g(e_{i},e_{j})v^{\ast}(e_k)v^{\ast}(e_l)\\
&\qquad+(\mu-\lambda)^{2}\!\sum_{i,j,k,l}W_{g}(e_i,e_k,e_j,e_l)v^{\ast}(e_i)v^{\ast}(e_j)v^{\ast}(e_k)v^{\ast}(e_l).
\end{split}
\end{equation}
By applying the symmetries of the Weyl tensor, the Bianchi identity, and the trace-free property of $W_g$, we obtain that
\begin{equation}
\sum_{i,l}W_{g}(e_i,e_l,E_{g}(e_i),E_{g}(e_l))=0.
\end{equation}
Therefore, equality holds in~\eqref{eq:3.5}
\begin{equation}
0=\sum_{i,l}W_{g}(e_i,e_l,E_{g}(e_i),E_{g}(e_l))=\sqrt{\frac{n-2}{2(n-1)}}\,|W_g|\,|E_g|^{2},
\end{equation}
and since $|E_g|\neq0$, it follows that $W_g=0$. Hence $M^{n}$ is conformally flat.
\end{proof}

\section{Proof of Theorem~\ref{teo:1.1}}\label{proof}
\medskip

For simplicity, we will denote $u=|E_g|^2$. According to inequality~\eqref{eq:3.1} we have
\begin{equation}\label{eq:4.1}
\square_g(u)\geq2u\sqrt{u+2\sigma_{2}(g)}\varphi(\sqrt{u},|W_{g}|)
\end{equation}
where $\varphi(x,y)$ is given in~\eqref{eq:def_varphi}. That is equivalent to  
\begin{equation}\label{eq:4.2}
\dfrac{u^{p-1}}{\sqrt{u+2\sigma_2(g)}}\square_g(u)\geq2u^p\varphi(\sqrt{u},|W_{g}|)
\end{equation}
for some $p\in\mathbb{R}$. But, from~\eqref{eqLgeneral3:1.20}, we have 
\begin{equation}\label{eq:4.5}
f(u)\square_g{(u)}=\text{div}{(f(u)P_g(\nabla{u}))}-f'(u)g(P_g(\nabla{u}),\nabla u).
\end{equation}
for every smooth function $f\in\mathcal{C}^{1}(\mathbb{R})$. Integrating both sides of~\eqref{eq:4.5} and using Stokes' theorem, we deduce that
\begin{equation}\label{eq:4.7}
\int_{M}f(u)\square_g{(u)}dM_g=-\int_{M}f'(u)g(P_g(\nabla{u}),\nabla(u))dM_g,
\end{equation}
for every smooth function $f$. In our case, for every real number $p\geq2$, we
choose
\begin{equation}\label{f_def}
f(t)=\dfrac{t^{p-1}}{\sqrt{t+2\sigma_{2}(g)}}.
\end{equation}
After a direct computation, we have
\begin{equation}\label{eq:4.81}
f'(t)=\frac{(2p-3)t^{p-1}+4(p-1)\sigma_2(g)t^{p-2}}{2(t+2\sigma_2(g))^{3/2}}.
\end{equation}
If $p>2,$ we obtain $f'(t)\geq0$ provided that $(M^{n},g)$ is a $\mathcal{A}_{2}$-manifold. This implies in~\eqref{eq:4.7} that
\begin{equation}\label{eq:4.9}
-\int_{M}f'(u)g(P_g(\nabla{u}),\nabla(u))dM_g\leq0.
\end{equation}
Therefore,
\begin{equation}\label{eq:4.10}
\int_{M}f(u)\square_g{(u)}dM_g\leq0.
\end{equation}
Integrating the inequity~\eqref{eq:4.2} and using~\eqref{eq:4.10}, we get
\begin{equation}\label{eq:4.11}
0\geq\int_{M}u^p\varphi(\sqrt{u},|W_{g}|)dM_g.
\end{equation}
Replacing $u$ and $\varphi(\sqrt{u},|W_{g}|)$ in terms of $|E_{g}|,$ we obtain 
\begin{equation}\label{eq:4.12}
\int_{M}|E_g|^{2p}\left(2\sqrt{|E_g|^2+2\sigma_2(S_g)}-(n-2)|E_g|\right)dM_g\leq\sqrt{\dfrac{2(n-2)^{3}}{n}}\int_{M}|E_g|^{2p}|W_g|dM_g.
\end{equation}
This proves inequality~\eqref{eq:1.0}.

If the equality in~\eqref{eq:4.12} occurs, then all inequalities become equalities. In particular,
\begin{equation}\label{eq:4.13}
\int_{M}f'(u)g(P_g(\nabla{u}),\nabla(u))dM_g=0.
\end{equation}
But, since $p\geq2$ and assuming that $\sigma_{2}(g)>0$, from~\eqref{eq:4.81}, we have
\begin{equation}\label{eq:4.8}
f'(u)=\frac{(2p-3)u^{p-1}+4(p-1)\sigma_2(g)u^{p-2}}{2(u+2\sigma_2(g))^{3/2}}\geq0,
\end{equation}
with $f'(u)=0$ if and only if $p>2$ and $u=0$. Consequently, taking into account ~\cite[Lemma 3.4]{Santos:24}, \eqref{eq:4.13} and~\eqref{eq:4.8} imply
\begin{equation}\label{eq:4.14}
g(P_g(\nabla u),\nabla u)=0.
\end{equation}
Since $P_g$ is positive definite, it follows from~\eqref{eq:4.14} that $\nabla u=0$ on $M^{n}$. Hence, $u=|E_g|^{2}$ is constant on $M^{n}$. If $|E_g|^{2}=0$, then $(M^{n},g)$ is Einstein. Otherwise, if $|E_g|$ is a positive constant, equality holds in~\eqref{eq:4.12}, and consequently in Proposition~\ref{prop:3.1}. Therefore, the Ricci tensor is parallel and has exactly two distinct eigenvalues, with multiplicities $1$ and $n-1$. So, the de Rham decomposition theorem~\cite[Theorem~1.100]{Besse:87} assures that $(M^{n},g)$ is isometrically covered by a product of Einstein manifolds. Since $g$ is conformally flat and has positive scalar curvature, the only possible splitting is $\mathbb{S}^{1}\times\mathbb{S}^{n-1}(\kappa)$ endowed with the product metric, for some $\kappa>0$. This completes the proof.
\qed

\begin{remark}
A key step in the proof of Theorem~\ref{teo:1.1} is the monotonicity of the function $f$ defined in~\eqref{f_def}, which is guaranteed under the assumption $p\geq 2$. If one additionally assumes that $|E|>0$, then the argument can be extended to the larger range $p\geq\frac{3}{2}$, since in this case the function $f$ becomes strictly increasing. Nevertheless, even under this stronger assumption, the method still yields only geometric models of the form $\mathbb{S}^{1}\times \mathbb{S}^{n-1}$. \end{remark}

\section*{Acknowledgements}

The first author is partially supported by CNPq, Brazil, grant 303311/2025-8, and Propesqi (UFPE). The second author is partially supported by CAPES, Brazil.

\end{document}